\numberwithin{equation}{section}
\newtheorem{theorem}{Theorem}[section] %
\newtheorem{lemma}[theorem]{Lemma} %
\newtheorem*{Pro}{Problem 1}
\newtheorem*{theoremA}{Theorem A}
\begin{document}
\title{The lower bound of weighted representation function }

\author{  Shi-Qiang Chen\footnote{ E-mail: csq20180327@163.com (S.-Q. Chen).} \\
\small  School of Mathematics and Statistics, \\
\small  Anhui Normal University,  Wuhu  241002,  P. R. China\\
}
\date{}
\maketitle \baselineskip 18pt \maketitle \baselineskip 18pt

{\bf Abstract.} For any given set $A$ of nonnegative integers and for any given two positive integers $k_1,k_2$, $R_{k_1,k_2}(A,n)$
 is defined as the number of solutions of the equation $n=k_1a_1+k_2a_2$ with $a_1,a_2\in A$. In this paper, we prove that if integer $k\geq2$ and set $A\subseteq\mathbb{N}$ such that $R_{1,k}(A,n)=R_{1,k}(\mathbb{N}\setminus A,n)$ holds for all integers $n\geq n_0$, then $R_{1,k}(A,n)\gg \log n$.
\vskip 3mm
{\bf 2020 Mathematics Subject Classification:} 11B13

{\bf Keywords:} Partition; weighted representation function

\vskip 5mm

\section{Introduction}

Let $\mathbb{N}$ be the set of all nonnegative integers. For a given set $A\subseteq\mathbb{N}$, $n\in \mathbb{N}$, representation functions $R_1(A,n)$, $R_2(A,n)$ and $R_3(A,n)$ are defined as
$$R_1(A,n)=\mid\{(a,a'):n=a+a',~a,a'\in A\}\mid,$$
$$R_2(A,n)=\mid\{(a,a'):n=a+a',~a<a',~a,a'\in A\}\mid,$$
$$R_3(A,n)=\mid\{(a,a'):n=a+a',~a\leq a',~a,a'\in A\}\mid,$$
respectively. S\'{a}rk\"{o}zy
once asked the following question$:$ for $i\in\{1,2,3\}$, are there two sets of nonnegative integers $A$ and $B$ such that
$$|(A\cup B)\setminus(A\cap B)|=+\infty,$$
$R_i(A,n)=R_i(B,n) $ for all sufficiently large integers $n$?
This problem of S\'{a}rk\"{o}zy has been solved completely. Recently, many researchers have obtained many profound results around this problem of S\'{a}rk\"{o}zy. For related research, please refer to \cite{ChenLev2016}-\cite{Qu2014}, \cite{RS2017}-\cite{Tang2016} .

For any given two positive integers $k_1,k_2$ and set $A\subseteq \mathbb{N}$, weighted representation function $R_{k_1,k_2}(A,n)$ is defined as the number of solutions of the equation $n=k_1a_1+k_2a_2$ with $a_1,a_2\in A$.

In 2012, Yang and Chen \cite{YangChen2012'} studied weighted representation function. They proved that if $k_1$ and $k_2$ are two integers with $k_2>k_1\geq2$ and $(k_1,k_2)=1$, then there does not exist set $A\subseteq\mathbb{N}$ such that $R_{k_1,k_2}(A,n)=R_{k_1,k_2}(\mathbb{N}\setminus A,n)$ for all sufficiently large integers $n$, and if $k$ is an integer with $k\geq2$, then exists a set $A\subseteq\mathbb{N}$ such that $R_{1,k}(A,n)=R_{1,k}(\mathbb{N}\setminus A,n)$ for all integers $n\geq1$. They also asked the following question.
\begin{Pro} Let $k$ be an integer with $k\geq2$ and $A\subseteq\mathbb{N}$ such that $R_{1,k}(A,n)=R_{1,k}(\mathbb{N}\setminus A,n)$ for all integers $n\geq n_0$. Is it true that $R_{1,k}(A,n)\geq1$ for all sufficiently larger integers $n$? Is it true that $R_{1,k}(A,n)\rightarrow\infty$ as $n\rightarrow\infty$?
\end{Pro}
In 2016, Qu \cite{Qu2016} solved this problem affirmatively and proved that the following result.
\begin{theoremA}(See \cite[Theorem 1]{Qu2016}.) Let $k$ be an integer with $k>1$ and $A\subseteq\mathbb{N}$ such that $R_{1,k}(A,n)=R_{1,k}(\mathbb{N}\setminus A,n)$ for all integers $n\geq n_0$. Then $R_{1,k}(A,n)\rightarrow\infty$ as $n\rightarrow\infty$.
\end{theoremA}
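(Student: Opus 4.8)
The plan is to pass to generating functions, convert the hypothesis into a single functional equation, read off the arithmetic structure this forces on $A$, and then count representations scale by scale. Write $A(x)=\sum_{a\in A}x^{a}$ and set $F(x)=(1-x)A(x)$. Since $\mathbb{N}\setminus A$ has generating function $\tfrac{1}{1-x}-A(x)$ and $\sum_n R_{1,k}(A,n)x^n=A(x)A(x^{k})$, the identity $R_{1,k}(A,n)=R_{1,k}(\mathbb{N}\setminus A,n)$ for $n\ge n_0$ becomes, after multiplying out and clearing denominators,
\begin{equation}
F(x)+F(x^{k})=1+D(x),
\end{equation}
where $D(x)=(1-x)(1-x^{k})P(x)$ is a fixed polynomial absorbing the finitely many exceptional $n<n_0$. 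This is the workhorse identity, and letting $x\to1^{-}$ already gives the density normalization $F(x)+F(x^{k})\to1$.

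Next I would extract the combinatorial structure by comparing coefficients in (1). Writing $f(m)=\chi_A(m)-\chi_A(m-1)\in\{-1,0,1\}$ for the coefficients of $F$, equation (1) yields, for all large $m$, that $f(m)=0$ when $k\nmid m$ and $f(m)=-f(m/k)$ when $k\mid m$. The first relation says $\chi_A$ is constant on each block $[kt,kt+k)$ once $t$ is large; the second, after summation and telescoping, yields the key complementary relation
\begin{equation}
\chi_A(t)+\chi_A(\lfloor t/k\rfloor)=1\qquad(t\ \text{large}).
\end{equation}
Iterating (2) gives $\chi_A(t)=\chi_A(\lfloor t/k^{2}\rfloor)$ for large $t$, so up to a finite initial segment $A$ is a self-similar set whose membership is governed by the top base-$k$ digits of $t$; in particular every sufficiently high scale $[k^{j},k^{j+1})$ meets both $A$ and its complement in a controlled, self-similar fashion.

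For the lower bound itself I would use the reformulation
\[
R_{1,k}(A,n)=\#\{a\in A:\ a\le n,\ a\equiv n\ (\mathrm{mod}\ k),\ (n-a)/k\in A\},
\]
and split the admissible $a$ according to its scale $a\in[k^{j},k^{j+1})$. There are $\asymp\log_k n$ such scales below $n$, and distinct scales produce distinct values of $a$, hence distinct representations. Because $\chi_A$ is constant on length-$k$ blocks, each in-$A$ block at scale $j$ contains exactly one $a$ with $a\equiv n\ (\mathrm{mod}\ k)$, and as $a$ runs through these blocks the partner $(n-a)/k$ sweeps out an arithmetic progression of length comparable to $k^{j}$. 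The aim is then to show that for a positive proportion of scales $j$ this progression hits $A$, so that each such scale contributes at least one representation and summing gives $R_{1,k}(A,n)\gg\log n$.

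The hard part will be exactly this last step: ruling out that, for the particular $n$ at hand, the forced partners $(n-a)/k$ conspire to avoid $A$ at almost all scales. This is where the rigidity in (1) and (2) must be used quantitatively—either by a pigeonhole/telescoping argument across the $\log_k n$ scales showing the partners cannot systematically miss the self-similar set $A$, or by feeding a hypothetical bound $R_{1,k}(A,n)=o(\log n)$ back into the functional equation and contradicting the density information $F(x)+F(x^{k})\to1$. The remaining work—tracking the finitely many small values where the support of $D$ lives and where (2) may fail, and checking that the representations counted at different scales are genuinely distinct with both coordinates in $A$—is routine but delicate bookkeeping.
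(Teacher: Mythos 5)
Your structural analysis is sound, and it in fact recovers by a different route exactly the two lemmas the paper runs on: your coefficient identities $f(m)=0$ for $k\nmid m$ and $f(km)=-f(m)$ are the generating-function form of the Yang--Chen criterion $\chi(n)+\chi\left(\lfloor n/k\rfloor\right)=1$ (Lemma \ref{lem1}(b), which the paper simply cites), and your iterated relation $\chi_A(t)=\chi_A(\lfloor t/k^2\rfloor)$ is the even case of the paper's Lemma \ref{lem2}. One small repair needed there: telescoping $f(km)=-f(m)$ only gives $\chi_A(kt+j)+\chi_A(t)=\gamma$ for some constant $\gamma\in\{0,1,2\}$; you must invoke your normalization $F(x)+F(x^k)\to1$ (or the fact that both $A$ and its complement are infinite) to pin down $\gamma=1$. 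Note also that the paper does not prove Theorem A directly: it cites Qu and instead proves the stronger Theorem \ref{thm1}, $R_{1,k}(A,n)\gg\log n$, which is the bound you are aiming at, so the right comparison is with that proof.

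The genuine gap is that your argument stops exactly where the theorem begins: you reduce to showing that for a positive proportion of scales the partners $(n-a)/k$ land in $A$, candidly label this ``the hard part,'' and offer only two unexecuted strategies (a pigeonhole across scales, or deriving a contradiction from $R_{1,k}(A,n)=o(\log n)$ via $F(x)+F(x^k)\to1$ --- the latter cannot work, since an Abelian limit as $x\to1^-$ is an average statement and is insensitive to the behavior of the representation function at individual $n$). Moreover, as posed, your reduction is a \emph{harder} problem than the one that needs solving. The idea you are missing is that you never have to force the partners into $A$: by the hypothesis $R_{1,k}(A,n)=R_{1,k}(\mathbb{N}\setminus A,n)$, a representation with both parts in $\mathbb{N}\setminus A$ is just as good. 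The paper exploits this symmetry together with the parity structure of Lemma \ref{lem2}: writing $n=k^i(k^j+1)t+r$ with $j$ odd, $T\le t\le kT-1$, the exponents $i$ and $i+j-1$ have the same parity, so the blocks $[k^it,k^it+k^i-1]$ and $[k^{i+j-1}t,k^{i+j-1}t+k^{i+j-1}-1]$ lie on the \emph{same} side of the partition; their weighted sumset covers the full integer interval $[k^i(k^j+1)t,\,k^i(k^j+1)t+k^{i+j}+k^i-k-1]$, so $n\in A+kA$ or $n\in(\mathbb{N}\setminus A)+k(\mathbb{N}\setminus A)$, and either way $R_{1,k}(A,n)\ge1$ (a separate shift handles the remaining range of $r$). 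Distinct odd $j$ force distinct $i$, hence disjoint ranges for the first coordinate and genuinely distinct representations, and summing over the roughly $\tfrac14\log_k n$ admissible odd $j$ gives the bound. Without this symmetry-plus-alignment mechanism, or some substitute for it, your outline establishes neither $R_{1,k}(A,n)\gg\log n$ nor even the qualitative conclusion $R_{1,k}(A,n)\to\infty$.
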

In this paper, we continue to focus on Problem 1 and give the lower bound of weighted representation function.

\begin{theorem}\label{thm1} Let $k$ be an integer with $k\geq 2$ and $A\subseteq\mathbb{N}$ such that $R_{1,k}(A,n)=R_{1,k}(\mathbb{N}\setminus A,n)$ holds for all integers $n\geq n_0$. Then $R_{1,k}(A,n)\gg \log n$.
\end{theorem}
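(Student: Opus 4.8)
The plan is to run the generating-function machine, convert the hypothesis into a rigid self-similar structure for $A$, and then distil everything into one functional equation for the representation count.

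\medskip
\noindent\emph{Setup and structure.} Write $A(x)=\sum_{a\in A}x^{a}$ and $\overline{A}(x)=\frac1{1-x}-A(x)$ for $\mathbb{N}\setminus A$. Since $\sum_n R_{1,k}(A,n)x^n=A(x)A(x^k)$ (and similarly for $\mathbb{N}\setminus A$), the hypothesis says $A(x)A(x^k)-\overline{A}(x)\overline{A}(x^k)=P(x)$ is a polynomial of degree $<n_0$. Substituting $\overline A$ and clearing denominators collapses this to $(1-x)A(x)+(1-x^k)A(x^k)=Q(x)$ with $Q(x)=(1-x)(1-x^k)P(x)+1$. Setting $f(x)=(1-x)A(x)=\sum_n\varepsilon_n x^n$, whose coefficients $\varepsilon_n=\mathbf 1_A(n)-\mathbf 1_A(n-1)$ lie in $\{-1,0,1\}$, this reads $f(x)+f(x^k)=Q(x)$. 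Comparing coefficients for $n>D:=\deg Q$ gives $\varepsilon_n=0$ when $k\nmid n$ and $\varepsilon_n=-\varepsilon_{n/k}$ when $k\mid n$. From this I would extract two facts: first, $\mathbf 1_A$ is constant on each block $[kt,kt+k-1]$ and, combining with the sign-flip, $\mathbf 1_A(m)+\mathbf 1_A(\lfloor m/k\rfloor)=1$ for all $m>D$ (the degenerate cases $A$ finite or cofinite are impossible, as they force $R_{1,k}(\mathbb{N}\setminus A,n)\to\infty$ while $R_{1,k}(A,n)$ stays bounded); second, the nonzero $\varepsilon_n$ sit only at $n=k^{j}m$ with $m\le D$, so there are $\gg\log N$ of them below $N$.

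\medskip
\noindent\emph{Reduction.} Using $\mathbf 1_A(n-kt)=1-\mathbf 1_A(\lfloor n/k\rfloor-t)$, valid once $n-kt>D$, inside $R_{1,k}(A,n)=\sum_{t}\mathbf 1_A(t)\mathbf 1_A(n-kt)$ turns all but $O(1)$ of the summands into $\mathbf 1_A(t)\bigl(1-\mathbf 1_A(q-t)\bigr)$ with $q=\lfloor n/k\rfloor$. Hence $R_{1,k}(A,n)=T(q)+O(1)$, where
$$T(q)=\#\{t:\ t\in A,\ q-t\in\mathbb{N}\setminus A\}=[y^{q}]\,A(y)\overline{A}(y).$$
So it suffices to prove $T(q)\gg\log q$.

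\medskip
\noindent\emph{A functional equation for the count.} Writing $F(y)=A(y)\overline A(y)=\dfrac{f(y)\bigl(1-f(y)\bigr)}{(1-y)^2}$ and using $f(y)+f(y^k)=Q(y)$, a short manipulation gives
$$F(y)-\Bigl(\tfrac{1-y^{k}}{1-y}\Bigr)^{2}F(y^{k})=G(y),\qquad G(y)=-\tfrac{1-y^{k}}{1-y}\,P(y)\bigl(f(y)-f(y^{k})\bigr).$$
Because $\prod_{l<i}\tfrac{1-y^{k^{l+1}}}{1-y^{k^{l}}}=\tfrac{1-y^{k^{i}}}{1-y}$ telescopes and $F(0)=G(0)=0$, iterating yields $F(y)=\sum_{i\ge0}\bigl(\tfrac{1-y^{k^{i}}}{1-y}\bigr)^{2}G(y^{k^{i}})$. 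The key point is that $G$ is a fixed polynomial $\tfrac{1-y^{k}}{1-y}P(y)$ times the $\{-1,0,1\}$-series $f(y)-f(y^{k})$, so its coefficients $g_m$ are integers bounded by a constant $C_0=C_0(P,k)$; moreover, by the location of the nonzero $\varepsilon_n$ from the first step, the $g_m$ are nonzero on $\gg\log$ many scales.

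\medskip
\noindent\emph{The crux.} Extracting $[y^{n}]$ gives $T(n)=\sum_{i\ge0}\sum_{m}\gamma^{(i)}_{n-k^{i}m}\,g_m$, where the $\gamma^{(i)}$ are the nonnegative ``tent'' coefficients of $(\tfrac{1-y^{k^{i}}}{1-y})^{2}$ (peak $k^{i}$) and, for each scale $i$, only $m\in\{\lfloor n/k^{i}\rfloor,\lfloor n/k^{i}\rfloor-1\}$ contribute; equivalently $T(n)=(s+1)T(\lfloor n/k\rfloor)+(k-1-s)T(\lfloor n/k\rfloor-1)+g_n$ with $s=n\bmod k$. The difficulty lives precisely here: the per-scale contributions carry signs and partially cancel, so naively keeping one nonnegative $T$-term only yields $T(n)\ge\sum_i g_{\lfloor n/k^{i}\rfloor}$, which can be $O(1)$. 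I would instead group the $\asymp\log_k n$ scales into blocks and show each block contributes a bounded-below positive amount, exploiting that the source coefficients $g_m$ are nonzero integers on a positive proportion of scales and that the tent weights $\gamma^{(i)}$ are strictly increasing up to their peak, so the positive tail contributions cannot be annihilated; the net is then of order (number of active scales) $\gg\log n$. Making this non-cancellation \emph{uniform in $n$} — as opposed to merely recovering $T(n)\to\infty$, which already follows from Theorem A — is the main obstacle, and it is what upgrades Qu's qualitative result to the quantitative bound $R_{1,k}(A,n)\gg\log n$.
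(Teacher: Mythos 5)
Your setup is correct as far as it goes, and I verified the algebra: the identity $(1-x)A(x)+(1-x^k)A(x^k)=Q(x)$, the block-plus-complement structure of $A$, the reduction $R_{1,k}(A,n)=T(\lfloor n/k\rfloor)+O(1)$ with $T(q)=[y^q]A(y)\overline{A}(y)$, the functional equation $F(y)-\bigl(\frac{1-y^k}{1-y}\bigr)^2F(y^k)=G(y)$ with bounded integer coefficients $g_m$, and the exact recursion $T(kq+s)=(s+1)T(q)+(k-1-s)T(q-1)+g_{kq+s}$ are all valid. But the proof stops exactly where it needs to start: your final paragraph is a plan, not an argument. Nothing in the proposal controls the signs of the $g_m$ across scales (they essentially alternate, since $\varepsilon_{k^jm}=(-1)^j\varepsilon_m$ along each chain), the tent weights evaluated at $n-k^i\lfloor n/k^i\rfloor$ move with $n$ in a way you never pin down, and your own remark that the naive bound collapses to $T(n)\ge\sum_i g_{\lfloor n/k^i\rfloor}=O(1)$ shows that cancellation is a real enemy, not a technicality. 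As written, the uniform non-cancellation claim --- which you yourself identify as ``the main obstacle'' --- is a genuine gap.

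The frustrating part is that the recursion you derived already closes the gap, if you stop iterating down to the signed sources and instead combine it with positivity of $T$ and Theorem A (which you are free to cite). From $T(kq+s)\ge(s+1)T(q)+(k-1-s)T(q-1)-C$ and $(s+1)+(k-1-s)=k$ you get $T(n)\ge k\min\bigl(T(q),T(q-1)\bigr)-C$ with $q=\lfloor n/k\rfloor$. By Theorem A (via $T(q)=R_{1,k}(A,kq)+O(1)$) choose $N_1$ with $T(m)\ge C+1$ for all $m\ge N_1-2$, and set $M_j=\min\{T(m):k^jN_1-2\le m\le k^{j+1}N_1\}$; every $n$ in generation $j+1$ has both $q$ and $q-1$ in generation $j$, so $M_{j+1}\ge kM_j-C$, hence $M_j-\frac{C}{k-1}\ge k^j\bigl(M_0-\frac{C}{k-1}\bigr)$ and $T(n)\gg k^j\gg n$. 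So your framework, pushed correctly, proves $R_{1,k}(A,n)\gg n$, which is stronger than the theorem; the missing idea was to exploit that the two nonnegative $T$-terms carry total weight $k\ge 2$, so the multiplicative gain beats the additive error $C$ once $T$ clears a fixed threshold. For contrast, the paper's proof is entirely elementary and avoids generating functions: using the self-similar structure (its Lemma \ref{lem2}), for each odd $j\le\frac12\log_k(n/T)$ it locates $i$ with $k^i(k^j+1)T\le n<k^{i+1}(k^j+1)T$ and produces an explicit representation of $n$ from an interval $[k^it,k^it+k^i-1]$ lying wholly in $A$ or wholly in $\mathbb{N}\setminus A$; distinct odd $j$ force distinct $i$, hence disjoint intervals and distinct representations, giving about $\frac14\lfloor\log_k(n/T)\rfloor$ of them. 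That argument yields only $\gg\log n$, but it is short and self-contained; yours is heavier machinery that, once the gap is repaired as above, buys a much better bound.
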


Throughout this paper, the characteristic function of the set $A\subseteq \mathbb{N}$ is denoted by
\begin{eqnarray*}
\chi(t)=
\begin{cases}
0          &\mbox{ $t\not\in A$},\\
1&\mbox{ $t\in A$}.\\
\end{cases}
\end{eqnarray*}
Let $C(x)$ be the set of nonnegative integers in $C$ which are less than or equal to $x$. For positive integer $k$ and sets $A,B\subseteq \mathbb{N}$, define $kA=\{ka:a\in A\}$ and $A+B=\{a+b:a\in A,~b\in B\}$.

\section{Lemmas}
\begin{lemma}(See \cite[Lemma 2]{YangChen2012'}.)\label{lem1}  Let $k\geq 2$ be an integer and $A\subseteq\mathbb{N}$. Then $R_{1,k}(A,n)=R_{1,k}(\mathbb{N}\setminus A,n)$ holds for all integers $n\geq n_0$ if and only if the following two conditions hold:

(a) for all $n_0\leq n< k+n_0$, we have
\begin{equation}\label{eq1}
\underset{a_1+ka_2=n}{\underset{a_1\geq0,a_2\geq0}{\sum}}1=\underset{a_1+ka_2=n}{\underset{a_1\geq0,a_2\geq0}{\sum}}\chi(a_1)+\underset{a_1+ka_2=n}{\underset{a_1\geq0,a_2\geq0}{\sum}}\chi(a_2);
\end{equation}
(b) for all $n\geq k+n_0$, we have
\begin{equation}\label{eq2}
\chi(n)+\chi\left(\left\lfloor \frac{n}{k}\right\rfloor\right)=1.
\end{equation}

\end{lemma}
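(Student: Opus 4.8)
The plan is to reduce the representation-function identity to a purely additive identity on the characteristic function $\chi$ of $A$, and then to differentiate that identity in steps of $k$. Writing $\bar\chi=1-\chi$ for the characteristic function of $\mathbb{N}\setminus A$ and expanding the product, I would first record that
\[
R_{1,k}(\mathbb{N}\setminus A,n)-R_{1,k}(A,n)=\sum_{\substack{a_1+ka_2=n\\ a_1,a_2\ge0}}\bigl[(1-\chi(a_1))(1-\chi(a_2))-\chi(a_1)\chi(a_2)\bigr]=\sum_{\substack{a_1+ka_2=n\\ a_1,a_2\ge0}}\bigl[1-\chi(a_1)-\chi(a_2)\bigr],
\]
so that $R_{1,k}(A,n)=R_{1,k}(\mathbb{N}\setminus A,n)$ holds for a given $n$ if and only if
\[
\sum_{\substack{a_1+ka_2=n\\ a_1,a_2\ge0}}1=\sum_{\substack{a_1+ka_2=n\\ a_1,a_2\ge0}}\chi(a_1)+\sum_{\substack{a_1+ka_2=n\\ a_1,a_2\ge0}}\chi(a_2).
\]
Denote this identity by $(\star_n)$. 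Thus $R_{1,k}(A,n)=R_{1,k}(\mathbb{N}\setminus A,n)$ for all $n\ge n_0$ is \emph{exactly} the statement that $(\star_n)$ holds for all $n\ge n_0$, and condition (a) is precisely $(\star_n)$ restricted to the base window $n_0\le n<k+n_0$. It remains to show that, granting $(\star_n)$ on this window, its validity for all larger $n$ is equivalent to condition (b).

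The key step is a telescoping computation comparing $(\star_n)$ with $(\star_{n-k})$ for $n\ge k+n_0$. Parametrizing solutions by $a_2\in\{0,1,\dots,\lfloor n/k\rfloor\}$ with $a_1=n-ka_2$, the three sums in $(\star_n)$ become $\lfloor n/k\rfloor+1$, the sum of $\chi$ over the arithmetic progression $\{n,\,n-k,\,\dots,\,n\bmod k\}$, and the sum of $\chi$ over $\{0,1,\dots,\lfloor n/k\rfloor\}$, respectively. Since $\lfloor(n-k)/k\rfloor=\lfloor n/k\rfloor-1$ and the progression for $n-k$ is obtained from that for $n$ by deleting its top term $n$, subtracting the three quantities at $n-k$ from those at $n$ yields
\[
1=\chi(n)+\chi\left(\left\lfloor\frac{n}{k}\right\rfloor\right),
\]
which is exactly equation (\ref{eq2}) of condition (b). In other words, $(\star_n)$ and $(\star_{n-k})$ together force (b) at $n$, and conversely (b) at $n$ propagates $(\star_{n-k})$ to $(\star_n)$.

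With these two observations the equivalence follows immediately in both directions. For the forward implication, if $(\star_n)$ holds for all $n\ge n_0$, then (a) is the special case $n_0\le n<k+n_0$, while for each $n\ge k+n_0$ both $(\star_n)$ and $(\star_{n-k})$ hold (as $n-k\ge n_0$), so the telescoping identity delivers (b). For the converse I would argue by induction on $n$ in steps of $k$: the base cases $n_0\le n<k+n_0$ are supplied by (a), and for $n\ge k+n_0$ the induction hypothesis gives $(\star_{n-k})$ while (b) at $n$ supplies the increment, so adding them recovers $(\star_n)$; hence $(\star_n)$ holds for all $n\ge n_0$, which is the desired representation-function identity.

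The only genuinely delicate point is the bookkeeping in the telescoping step: one must verify carefully that the progression $\{n\bmod k,\,\dots,\,n-k,\,n\}$ indexing the first $\chi$-sum loses precisely the single term $\chi(n)$ when $n$ is replaced by $n-k$, and that the floor identity $\lfloor(n-k)/k\rfloor=\lfloor n/k\rfloor-1$ makes the second $\chi$-sum lose precisely $\chi(\lfloor n/k\rfloor)$; these two cancellations are what produce the clean relation $1=\chi(n)+\chi(\lfloor n/k\rfloor)$. Everything else is formal once the identity $(\star_n)$ is in place, so I expect no further obstacles.
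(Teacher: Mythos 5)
Your proof is correct: the cross terms $\chi(a_1)\chi(a_2)$ genuinely cancel in the difference $R_{1,k}(\mathbb{N}\setminus A,n)-R_{1,k}(A,n)$, and the telescoping bookkeeping checks out, since $(n-k)\bmod k=n\bmod k$ and $\lfloor (n-k)/k\rfloor=\lfloor n/k\rfloor-1$ mean that passing from $(\star_n)$ to $(\star_{n-k})$ deletes exactly the term $\chi(n)$ from the first $\chi$-sum and $\chi\left(\lfloor n/k\rfloor\right)$ from the second, while the solution count drops by exactly $1$. The paper itself gives no proof of this lemma but simply cites \cite[Lemma 2]{YangChen2012'}, and your difference-plus-telescoping argument (reduce to the linear identity $(\star_n)$, then induct in steps of $k$ with condition (a) as the base window and condition (b) as the increment) is essentially the original Yang--Chen argument, so your write-up is a correct, self-contained reconstruction of the cited proof.
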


\begin{lemma}\label{lem2}   Let $k\geq 2$ be an integer and $A\subseteq\mathbb{N}$. Then $R_{1,k}(A,n)=R_{1,k}(\mathbb{N}\setminus A,n)$ holds for all integers $n\geq n_0$, then for any $n\geq \lfloor\frac{n_0+k}{k}\rfloor+1$, we have
\begin{eqnarray}\label{eq3}
&&\chi(n)+\chi(k^in+j)=1,~~~j=0,\ldots,k^i-1, ~~~\text{if $i$ is odd};\nonumber\\
&&\chi(n)=\chi(k^in+j), ~~~~~~j=0,\ldots,k^i-1,~~~~~\text{if $i$ is even}.
\end{eqnarray}
\end{lemma}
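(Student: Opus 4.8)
The plan is to fix $n$ and induct on $i$, using condition (b) of Lemma~\ref{lem1} as the driving recurrence. The starting observation is that the set $\{k^{i}n+j : 0\le j\le k^{i}-1\}$ is exactly the set of integers $m$ with $\lfloor m/k^{i}\rfloor=n$; so the claim is equivalent to asserting that $\chi(m)+\chi(n)=1$ for every such $m$ when $i$ is odd, and $\chi(m)=\chi(n)$ when $i$ is even. Reformulating the statement this way is what makes the inductive mechanism transparent.

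For the base case $i=1$, any $m=kn+j$ with $0\le j\le k-1$ satisfies $\lfloor m/k\rfloor=n$. Since $n\ge\lfloor(n_0+k)/k\rfloor+1$ forces $m\ge kn\ge k+n_0$, condition (b) of Lemma~\ref{lem1} applies and yields $\chi(m)+\chi(n)=1$, which is precisely the desired (odd) relation.

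For the inductive step I would pass from $i$ to $i+1$ via the floor identity $\lfloor\lfloor m/k\rfloor/k^{i}\rfloor=\lfloor m/k^{i+1}\rfloor$. Given $m$ with $\lfloor m/k^{i+1}\rfloor=n$, set $m'=\lfloor m/k\rfloor$, so that $\lfloor m'/k^{i}\rfloor=n$ and the induction hypothesis relates $\chi(m')$ to $\chi(n)$. Meanwhile $m\ge k^{i+1}n\ge k+n_0$, so Lemma~\ref{lem1}(b) gives $\chi(m)=1-\chi(m')$. Substituting the hypothesis and tracking the parity flip then produces exactly the required relation for $i+1$: if $i$ is even then $\chi(m')=\chi(n)$ and hence $\chi(m)=1-\chi(n)$ (the odd case), while if $i$ is odd then $\chi(m')=1-\chi(n)$ and hence $\chi(m)=\chi(n)$ (the even case).

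The only point demanding care---rather than a genuine obstacle---is verifying that the threshold $n\ge\lfloor(n_0+k)/k\rfloor+1$ guarantees $m=k^{i}n+j\ge k+n_0$ at every level $i\ge1$ and every admissible $j$, so that Lemma~\ref{lem1}(b) is legitimately applicable throughout the induction. Since $kn>n_0+k$ already at the base level and the powers of $k$ only increase $m$, this is immediate; once it is secured, the argument reduces to a mechanical induction whose sole bookkeeping is the parity alternation.
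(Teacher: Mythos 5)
Your proposal is correct and follows essentially the same route as the paper: induction on $i$ with Lemma~\ref{lem1}(b) as the recurrence, tracking the parity flip at each level; your top-down step (passing from $m$ to $m'=\lfloor m/k\rfloor$ via $\lfloor\lfloor m/k\rfloor/k^{i}\rfloor=\lfloor m/k^{i+1}\rfloor$) is just the mirror image of the paper's bottom-up step (writing level-$(s+1)$ elements as $k(k^{s}n+j)+u$). Your explicit verification that the threshold $n\geq\lfloor\frac{n_0+k}{k}\rfloor+1$ keeps every application of Lemma~\ref{lem1}(b) legitimate is a point the paper leaves implicit.
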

\begin{proof}We now use induction on $i$ to prove that \eqref{eq3} is true.
By \eqref{eq2}, we have
\begin{equation}\label{eq4}
\chi(n)+\chi(kn+j)=1,~~~j=0,\ldots,k-1.
\end{equation}
Therefore,  \eqref{eq3} is true for $i=1$.

Next, we assume that \eqref{eq3} is true for $i=s$, we are going to prove the truth of \eqref{eq3} for $i=s+1$.
If $s+1$ is even, then by the induction hypothesis on $i=s$, we have
\begin{equation}\label{eq6}
\chi(n)+\chi(k^sn+j)=1,~~~j=0,\ldots,k^s-1.
\end{equation}
By \eqref{eq2}, we have
\begin{equation*}
\chi(k^sn+j)+\chi(k(k^sn+j)+u)=1,~~~j=0,\ldots,k^s-1;u=0,\ldots,k-1.
\end{equation*}
It follows from \eqref{eq6} that
\begin{equation*}
\chi(n)=\chi(k(k^sn+j)+u),~~~j=0,\ldots,k^s-1;u=0,\ldots,k-1,
\end{equation*}
that is
\begin{equation}\label{eq8}
\chi(n)=\chi(k^{s+1}n+j),~~~j=0,\ldots,k^{s+1}-1.
\end{equation}
If $s+1$ is odd, then by the induction hypothesis on $i=s$, we have
\begin{equation}\label{eq9}
\chi(n)=\chi(k^sn+j),~~~j=0,\ldots,k^s-1.
\end{equation}
By \eqref{eq2}, we have
\begin{equation*}
\chi(k^sn+j)+\chi(k(k^sn+j)+u)=1,~~~j=0,\ldots,k^s-1;u=0,\ldots,k-1,
\end{equation*}
It follows from \eqref{eq9} that
\begin{equation*}
\chi(n)+\chi(k(k^sn+j)+u)=1,~~~j=0,\ldots,k^s-1;u=0,\ldots,k-1,
\end{equation*}
that is
\begin{equation}\label{eq11}
\chi(n)+\chi(k^{s+1}n+j)=1,~~~j=0,\ldots,k^{s+1}-1.
\end{equation}
Up to now, \eqref{eq3} has been proved.

This completes the proof of Lemma \ref{lem2}.
\end{proof}

\section{Proof of Theorem \ref{thm1}}

Let $T=\lfloor\frac{n_0+k}{k}\rfloor+1$.
Given an odd $j\in[0,\lfloor\frac{\lfloor\log_{k}{\frac{n}{T}}\rfloor}{2}\rfloor]$, for any sufficiently larger integer $n$,  there exist an integer $i$  such that
\begin{equation}\label{ec1}k^i(k^{j}+1)T\leq n<k^{i+1}(k^{j}+1)T.\end{equation}
Now we are going to prove $i+j=\lfloor\log_{k}{\frac{n}{T}}\rfloor$ or $\lfloor\log_{k}{\frac{n}{T}}\rfloor-1$. In deed, if $i+j\geq\lfloor\log_{k}{\frac{n}{T}}\rfloor+1$, then $$\frac{n}{T}=k^{\log_{k}{\frac{n}{T}}}<k^{\lfloor\log_{k}{\frac{n}{T}}\rfloor+1}\leq k^{i+j}<k^{i+j}+k^i\leq\frac{n}{T},$$
 a contradiction. If $i+j\leq\lfloor\log_{k}{\frac{n}{T}}\rfloor-2$, then
 $$\frac{n}{T}< k^{i+j+1}+k^{i+1}\leq 2k^{i+j+1}\leq2 k^{\lfloor\log_{k}{\frac{n}{T}}\rfloor-1}\leq2k^{\log_{k}{\frac{n}{T}}-1}\leq k^{\log_{k}{\frac{n}{T}}}=\frac{n}{T},$$
 a contradiction.
By \eqref{ec1}, there exist $T\leq t\leq kT-1$ and $0\leq r< k^i(k^j+1)$ such that
$$n=k^i(k^{j}+1)t+r.$$
According to the value of $r$, we divide into the following two cases for discussion:
{\bf Case1.} $0\leq r\leq k^{i+j}+k^i-k-1$. Noting that $j$ is an odd, by \eqref{eq3}, we have
$$[k^{i+j-1}t,k^{i+j-1}t+k^{i+j-1}-1]\cup[k^it,k^it+k^i-1]\subseteq A~\text{or}~\mathbb{N}\setminus A.$$
Then
$$[k^i(k^j+1)t,k^i(k^j+1)t+(k^{i+j}+k^i-k-1)]\subseteq A+kA~\text{or}~(\mathbb{N}\setminus A)+k(\mathbb{N}\setminus A),$$
it follows that $n\in A+kA~\text{or}~(\mathbb{N}\setminus A)+k(\mathbb{N}\setminus A)$, which implies that
$$R_{1,k}(A,n)=R_{1,k}(\mathbb{N}\setminus A,n)\geq1.$$
Up to now, we proved that for a given odd $j\in[0,\lfloor\frac{\lfloor\log_{k}{\frac{n}{T}}\rfloor}{2}\rfloor]$, we have
\begin{equation}\label{equ1}R_{1,k}(A,n)=R_{1,k}(\mathbb{N}\setminus A,n)\geq1.\end{equation}
It is clear that for any two different odds $j_1, j_2$ such that $j_1,j_2\in[0,\lfloor\frac{\lfloor\log_{k}{\frac{n}{T}}\rfloor}{2}\rfloor]$
and integers $i_1,i_2$ such that
$$i_1+j_1=K_1,~~~~i_2+j_2=K_2,$$
where $K_1,K_2\in\{\lfloor\log_{k}{\frac{n}{T}}\rfloor,\lfloor\log_{k}{\frac{n}{T}}\rfloor-1\},$ we have
\begin{equation}\label{e1}i_1\neq i_2.\end{equation}
In deed, assume that $j_1<j_2$, since $$1=-1+2\leq K_1-K_2-j_1+j_2=i_1-i_2,$$
it follows that $i_1\neq i_2$.
By \eqref{e1}, we have
\begin{equation}\label{equ2}[k^{i_1}t,k^{i_1}t+k^{i_1}-1]\cap[k^{i_2}t,k^{i_2}t+k^{i_2}-1]=\emptyset.\end{equation}
Therefore, by \eqref{equ1} and \eqref{equ2}, we have
$$R_{1,k}(A,n)=R_{1,k}(\mathbb{N}\setminus A,n)\geq\lfloor\frac{\lfloor\log_{k}{\frac{n}{T}}\rfloor}{4}\rfloor\gg \log n.$$
{\bf Case 2.} $(k^{i+j}+k^i-k-1)+1\leq r\leq k^i(k^j+1)-1$. Since $$|A\cap \{T,kT\}|=1,$$
it follows that
\begin{equation}\label{eqb2}|A(kT)|\geq1,~|(\mathbb{N}\setminus A)(kT)|\geq1.\end{equation}
Let $r=k^{i+j}+k^i-k-1+s,~s\in[1,k]$. Then
\begin{equation}\label{eqb12}n=k^i((k+1)t)+k^{i+j}+k^i-k-1+s=k^i((k+1)t+k^j)+k^i-k-1+s.\end{equation}
By \eqref{eq3}, we have
\begin{equation*}\label{eq37}
[k^i((k+1)t+k^j),k^i((k+1)t+k^j)+k^i-1]\subseteq A~\text{or}~\mathbb{N}\setminus A.
\end{equation*}
By \eqref{eqb2}, we can choose $a\in[0,kT]$ such that
\begin{equation}\label{eqb22}\{a\}\cup[k^i((k+1)t+k^j),k^i((k+1)t+k^j)+k^i-1]\subseteq A~\text{or}~\mathbb{N}\setminus A.\end{equation}
Since $j\in[0,\lfloor\frac{\lfloor\log_{k}{\frac{n}{T}}\rfloor}{2}\rfloor]$, it follows from
$$i+j=\lfloor\log_{k}{\frac{n}{T}}\rfloor~~\text{or}~~\lfloor\log_{k}{\frac{n}{T}}\rfloor-1$$
that
$$k^i-k-1\geq k^{\lfloor\frac{\lfloor\log_{k}\frac{n}{T}\rfloor}{2}\rfloor-1}-k-1\geq k^2T\geq ka$$
for any sufficiently larger $n$. It follows from \eqref{eqb12} and \eqref{eqb22} that
$$k^i((k+1)t+k^j)+s\leq n-ka\leq k^i((k+1)t+k^j)+k^i-k-1+s,$$
which implies that $n\in A+kA~\text{or}~(\mathbb{N}\setminus A)+k(\mathbb{N}\setminus A)$,
and so
$$R_{1,k}(A,n)=R_{1,k}(\mathbb{N}\setminus A,n)\geq1.$$
Up to now, we proved that for any given odd $j\in[0,\lfloor\frac{\lfloor\log_{k}{\frac{n}{T}}\rfloor}{2}\rfloor]$, we have
\begin{equation}\label{equ3}R_{1,k}(A,n)=R_{1,k}(\mathbb{N}\setminus A,n)\geq1.\end{equation}
By \eqref{e1}, we have
\begin{equation}\label{equ4}[k^{i_1}((k+1)t+k),k^{i_1}((k+1)t+k)+k^{i_1}-1]\cap[k^{i_2}((k+1)t+k),k^{i_2}((k+1)t+k)+k^{i_2}-1]=\emptyset.\end{equation}
Therefore, by \eqref{equ3} and \eqref{equ4}, we have
$$R_{1,k}(A,n)=R_{1,k}(\mathbb{N}\setminus A,n)\geq\lfloor\frac{\lfloor\log_{k}{\frac{n}{T}}\rfloor}{4}\rfloor\gg \log n.$$

This completes the proof of Theorem \ref{thm1}.


\begin{thebibliography}{30}

\bibitem{ChenLev2016}  Y.G. Chen and V.F. Lev,  {\it Integer sets with identical representation functions}, Integers 16(2016), A36.

\bibitem{Dombi2002} G. Dombi, {\it Additive properties of certain sets,} Acta Arith. 103(2002), 137-146.

\bibitem{Lev2004}  V.F. Lev, {\it Reconstructing integer sets from their representation functions}, Electron. J. Combin. 11(2004), R78.

\bibitem{KissSandor2017} S.Z. Kiss and C. S\'{a}ndor, {\it Partitions of the set of nonnegative integers with the same representation functions}, Discrete Math. 340(2017), 1154-1161.

\bibitem{Qu2014} Z.H. Qu, {\it A remark on weighted representation functions}, Taiwanese J. Math. 18(2014), 1713-1719.

\bibitem{Qu2016} Z.H. Qu, {\it A note on representation functions with different weights}, Collq. Math. 143(2016), 105-112.

\bibitem{RS2017}  E. Rozgonyi and C. S\'{a}ndor, {\it An extension of Nathanson's theorem on representation functions}, Combinatorica 37(2017), 521-537.

\bibitem{Sandor2004} C. S\'{a}ndor, {\it Partitions of natural numbers and their representation functions}, Integers 4(2004), A18.

\bibitem{Tang2008} M. Tang, {\it Partitions of the set of natural numbers and their representation functions}, Discrete Math. 308(2008), 2614-2616.

\bibitem{Tang2016} M. Tang, {\it Partitions of natural numbers and their representation functions}, Chinese Ann. Math. Ser A 37(2016), 41-46. For English version, see Chinese J. Contemp. Math. 37(2016), 39-44.

\bibitem{YangChen2012'} Q.H. Yang and Y.G. Chen, {\it Partitions of natural numbers with the same weightes representation functions}, J. Number Theory 132(2012), 3047-3055.


\end{thebibliography}
\end{document}